\newcounter{iii}
\newcommand{\R}{{\mathbb R}}
\theoremstyle{plain}
\newtheorem{thm}{Theorem}
\newtheorem{lem}{Lemma}
\newtheorem{prop}{Proposition}
\newtheorem{conj}{Conjecture}
\theoremstyle{definition}
\newtheorem{defi}{Definition}
\def\Vol{\mbox{\rm Vol }}
\title{Non-dissective coverings by planks}
\author{Andrey Kupavskii\thanks{Moscow Institute of Physics and Technology, Saint-Petersburg State University, Innopolis University, Russia; Email: {\tt kupavskii@yandex.ru}} \and J\'anos Pach\thanks{R\'enyi Insitute, Hungary; Email:
{\tt pach@renyi.hu}. Supported by NKFIH grants K-131529,  and ERC Advanced Grant 882971``GeoScape.''}}
\begin{document}

\maketitle

\begin{abstract}
A \emph{plank} is the part of space between two parallel planes. The following open problem, posed 45 years ago, can be viwed as the converse of Tarski's plank problem (Bang's theorem): Is it true that if the total width of a collection of planks is sufficiently large, then the planks can be individually translated to cover a unit ball $B$?

A translative covering of $B$ by planks is said to be \emph{non-dissective} if the planks can be added one by one, in some order, such that the uncovered part remains connected at each step, and is empty at the end. Improving a classical result of Groemer, we show that every set of $C/\epsilon^{7/4}$ planks of width $\epsilon$ admits a non-dissective translative covering of $B$, provided $C$ is large enough. Our proof yields a low-complexity algorithm. We also establish the first nontrivial lower bound of $c/\epsilon^{4/3}$ for this quantity.
\end{abstract}

\section{Introduction}

A {\it plank of width $\epsilon$} in $\R^d$ is region $P$ bounded by two parallel hyperplanes at distance $\epsilon$ from each other. The distance between these two hyperplanes, is called the {\it width} of $P$.

In 1932, Tarski~\cite{Tar32} posed the question now known as the \emph{plank problem}: if a convex body $K$ in $\mathbb{R}^d$ is covered by a collection of planks, is it true that the sum of their widths is at least as large as the minimal \emph{width} of $K$, that is, the minimum width of a plank that contains $K$? Twenty years later, this question was answered affirmatively by Bang \cite{Ban50, Ban51}; see also Fenchel \cite{Fen51} for an alternative proof. Bang also conjectured that a similar statement is true with ``relative widths'' with respect to $K$. This was proved by Ball~\cite{Bal91} for centrally symmetric bodies, but in the general case the problem is open. A spherical analogue of the plank problem, raised by L.~Fejes T\'oth~\cite{Fe73}, was settled by Jiang and Polyanskii~\cite{JiP17} (see also Ortega-Moreno~\cite{Ort21}). A.~Bezdek \cite{Bez03} conjectured that the following planar variant of Tarski's conjecture is also true: the total width of a set of planks that cover a unit disk with a small circular hole around its center is at least 2;  see also \cite{WhWi07, Ambr25}.  For many other related open problems, consult~\cite{BrMP05}, Section 3.4. For a recent survey on the topic, see~\cite{Verr25}.
\medskip

The following natural question was asked independently by Groemer~\cite{Gr81} and Makai-Pach~\cite{MaP83}. Can one reverse Bang's theorem? Is it true that if the total width of a collection of planks $P_1, P_2, \ldots$ in $\mathbb{R}^d$ is large enough, then each $P_i$ can be translated into a new position $P'_i$ such that $P'_1, P'_2,\ldots$ together cover, say, a ball $B^d$ of \emph{unit radius}? In other words, does there exist a constant $C_d$ such that any collection of planks of total width at least $C_d$ permits a \emph{translative covering} of the unit ball $B^d$. In the plane, this was proved independently by Makai-Pach~\cite{MaP83} and Erd\H os-Straus (unpublished); see also~\cite{Gr82,Gr84}. For $d\ge 3$, the problem is open. In particular, we have the following.

\begin{conj} (\cite{MaP83})
For every $d\ge 3$, there exists a constant $C_d$ such that every collection $C_d\epsilon^{-1}$ planks of width $\epsilon$ in $\R^d$ permits a translative covering of the unit ball. \label{conj1}
\end{conj}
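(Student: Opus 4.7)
My plan is to reduce the $d$-dimensional problem to the planar one via a direction-clustering step followed by induction on dimension. First, fix a small spherical scale $\delta$ and a $\delta$-net on $S^{d-1}$; assign each plank $P_i$ to the nearest net direction. All planks in one cluster then have normals within angle $\delta$, so up to a factor of $\sec\delta$ close to $1$, their translates along the common direction can be stacked into a single ``mega-slab'' whose width equals the sum of the cluster widths. If any cluster has total width at least $2 + o(1)$, we cover $B^d$ immediately by placing this mega-slab across the ball. Otherwise every cluster contributes width below $2$, so the $C_d/\epsilon$ given planks distribute their total width $C_d$ across many genuinely distinct orientations, which is the regime we must handle.

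In this orientation-spread regime I would attempt induction on the dimension, the base case $d=2$ being the theorem of Makai--Pach and Erd\H{o}s--Straus. Fix a direction $v \in S^{d-1}$ realized by some cluster; slice $B^d$ by $\Theta(\epsilon^{-1})$ parallel hyperplanes perpendicular to $v$ at spacing $\epsilon$. Each slice is an almost-unit $(d-1)$-ball, and every plank $P_i$ with $u_i$ far from $\pm v$ restricts to a $(d-1)$-dimensional plank of width about $\epsilon/\sin\angle(u_i,v)$ in each slice it meets, while the planks of the near-$v$ cluster serve to ``glue'' consecutive slices together. The hope is then to invoke the inductive hypothesis slice by slice.

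The principal obstacle, and presumably the reason Conjecture~\ref{conj1} has been open for 45 years, is that this slicing argument does not preserve the $\epsilon^{-1}$ scaling. A single $d$-dimensional plank meets every one of the $\Theta(\epsilon^{-1})$ slicing hyperplanes, but it can be translated only \emph{once}; its restrictions to the various slices are forced to be parallel translates of one another. Naively assigning each plank to a single slice loses a factor of $\epsilon^{-1}$ and yields only a trivial bound. A successful proof would seem to need a discrepancy-type argument that simultaneously positions the $d$-dimensional planks so that the induced slice-planks form valid inductive coverings in every slice at once --- a coordination problem with no analogue in the planar setting. This is where I expect the real difficulty to lie, and it is essentially why the present paper settles for the weaker non-dissective variant, in which a greedy, slice-by-slice construction becomes available.
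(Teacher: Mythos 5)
The statement you were asked about is not a theorem of this paper: it is Conjecture~\ref{conj1}, attributed to Makai and Pach (1983), and the paper explicitly records that it is open for every $d\ge 3$ (the best result toward it, due to Kupavskii and Pach, uses $O(\epsilon^{-1}\log(\epsilon^{-1}))$ planks). So there is no proof in the paper to compare yours with, and your text, quite candidly, is not a proof either: it is a sketch of an approach together with an accurate diagnosis of why that approach fails. The gap you name is the real one. A plank in $\R^d$ receives a single translation, yet its restriction appears simultaneously in all $\Theta(\epsilon^{-1})$ slices perpendicular to $v$, and these restrictions are rigidly coupled (they are translates of one another within their slices). Applying the planar theorem independently in each slice would require independent placements, so the honest accounting gives only the trivial bound after losing a factor $\epsilon^{-1}$; no discrepancy-type coordination argument is known that recovers it, and you do not supply one.

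Two further points. First, the ``mega-slab'' branch of your argument is essentially vacuous: the total width of all $C_d\epsilon^{-1}$ planks is only the constant $C_d$, and if the normals are spread over the $\Theta(\delta^{-(d-1)})$ cells of a $\delta$-net, each cluster typically carries width $O(C_d\,\delta^{d-1})\ll 2$, so the case in which one cluster alone covers $B^d$ can simply fail to occur; it does not reduce the problem. Second, the paper's own lower bound, $f(3,\epsilon)\ge c\epsilon^{-4/3}$, shows that the non-dissective analogue of Conjecture~\ref{conj1} is \emph{false}: any eventual proof of the conjecture must exploit coverings whose intermediate uncovered regions become disconnected, which is a structural constraint your greedy slice-by-slice picture does not engage with. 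In short, the statement remains open, and your proposal correctly locates, but does not bridge, the central difficulty.
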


For a long time, the best known result was due to Groemer~\cite{Gr81}, who designed a greedy algorithm to construct a translative covering of the unit ball for any collection of $O(\epsilon^{-(d+1)/2})$ planks of width $\epsilon$ in $\R^d$. Kupavskii and Pach~\cite{KuP16, KuP17} came close to proving Conjecture~\ref{conj1}: they showed that the statement is true for any system of only $O(\epsilon^{-1}\log(\epsilon^{-1}))$ planks.

Nevertheless, Groemer's algorithm had a very interesting additional feature. Given a sequence of planks, $P_1,\ldots,P_f$ by a greedy method, he found a suitable translate $P'_i$ of $P_i,\; i=1,\ldots,f$ with the property that
$$B^d\setminus \cup_{i=1}^j P_i' $$
is a \emph{connected} set, for every $j\ge 1$. Moreover, Groemer showed that if we insist on this property and we also fix the order of the planks, then the bound $O(\epsilon^{-(d+1)/2})$ is essentially best possible.

\begin{defi}
We say that a collection of planks $P_1,\ldots, P_f$ permits a {\it non-dissective translative covering} of a convex body $K$ in $\R^d$ if there exist a permutation
$$\sigma: \{1,\ldots,f\}\to \{1,\ldots,f\}$$ and suitable translates $P'_i$ of $P_i$ with the property that
$$K\setminus \cup_{i=1}^j P'_{\sigma(i)}$$
is a \emph{connected} set for $j=1,\ldots, f$, and is empty for $j=f$.
\end{defi}

In the present paper, we study the following variant of the problem addressed by Conjecture~\ref{conj1}: What is the smallest integer $f=f(d,\epsilon)$ such that every collection of $f$ planks of width $\epsilon$ in $\R^d$ permits a non-dissective translative covering of the unit ball $\R^d$?
\smallskip

As we have seen before, we have $f(2,\epsilon)=\Theta(1/\epsilon)$ and, for every $d\ge 3,$
\begin{equation}\label{EQ}
 c_d\epsilon^{-1}\le f(d,\epsilon)\le C_d\epsilon^{-(d+1)/2}.
\end{equation}

Here, we concentrate on the first open case $d=3$. Our main theorem improves on both the lower and the upper bound in (\ref{EQ}).

\begin{thm} There exist constants $c,C>0$ such that  $$c \epsilon^{-4/3}\le f(3,\epsilon)\le C\epsilon^{-7/4}.$$ \label{thm1}
\end{thm}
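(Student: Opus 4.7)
The plan is to handle the two bounds separately, each turning on a careful analysis of how the non-dissection constraint limits the ``effective volume'' a plank can cover as a function of the directions available.

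\textbf{Upper bound $f(3,\epsilon)\le C\epsilon^{-7/4}$.} The plan is a greedy algorithm whose key step is a pigeonhole on plank directions. Given $N\ge C\epsilon^{-7/4}$ planks of width $\epsilon$, I would partition them into angular buckets on $S^2$ of diameter $\eta$ (to be tuned). By pigeonhole, some bucket $B$ contains $m\ge cN\eta^2$ planks with almost parallel normals. The planks of $B$ can be placed so that each is tangent to the previous one's inner face, forming a ``near-parallel chain'' that peels a cap of depth $\approx m\epsilon$ from the current uncovered region while preserving connectedness (the chain is tangent on one side throughout). Choosing $\eta\asymp \epsilon^{3/8}$ makes $m\asymp \epsilon^{-1}$, so the first chain peels a cap of depth $\Omega(1)$, covering a constant fraction of the ball. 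One then recurses on the residual convex-ish region with the remaining planks. The total count is controlled by the recursion depth times the per-stage cost, and the balance at $\eta=\epsilon^{3/8}$ gives $\epsilon^{-7/4}$.

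The main obstacle is the recursion: after the initial chain, the uncovered region is no longer smooth---it has flat faces created by the chain's inner boundary and possibly curved pieces of the original sphere. I would argue that at every stage the uncovered region is star-shaped (or at least has a ``convex core''), so that the same bucketing-and-chaining lemma applies to it, with the ``effective radius'' decreasing geometrically. Handling the angular-bucket bookkeeping across stages---in particular, showing that popular buckets remain popular relative to the shrunken region---is the heart of the argument.

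\textbf{Lower bound $f(3,\epsilon)\ge c\epsilon^{-4/3}$.} The plan is to exhibit an adversarial plank family that defeats every non-dissective ordering. Take $D\asymp \epsilon^{-2/3}$ unit normals $\mathbf{n}_1,\ldots,\mathbf{n}_D$ forming an $\epsilon^{1/3}$-separated net on $S^2$, and include $m\asymp \epsilon^{-2/3}$ planks of width $\epsilon$ per direction; in total $N\asymp \epsilon^{-4/3}$. The decisive observation is a \emph{depth-bounding lemma}: the union of all planks with normal $\mathbf{n}_i$ projects onto the $\mathbf{n}_i$-axis to a set of length at most $m\epsilon=\epsilon^{1/3}$, simply because each plank projects to an interval of length $\epsilon$. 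Thus, to cover a point $p$ of the ball using any plank with normal $\mathbf{n}_i$, one needs $|1-p\cdot\mathbf{n}_i|\le m\epsilon=\epsilon^{1/3}$. Maximizing over $i$ and using that the $\mathbf{n}_i$ are an $\epsilon^{1/3}$-net, every covered point must lie within distance $O(\epsilon^{1/3})$ of $S^2$. Consequently, the inner ball of radius $1-c\epsilon^{1/3}$ cannot be reached at all, so the collection cannot cover $B$.

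The main obstacle here is subtler than the volumetric argument above: in principle a plank could be placed deep inside the ball after earlier peels have ``prepared'' an interior boundary tangent to it, seemingly sidestepping the outer-shell restriction. The depth-bounding lemma however is a purely \emph{axial} (projectional) statement that is indifferent to the placement order, and this is what must be carefully verified---together with checking that non-dissection indeed forces each plank's slab to abut the covered region. Making this rigorous, likely via a topological/connectedness argument on the evolution of the uncovered set, is the crux of the lower bound.
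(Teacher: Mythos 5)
Your ``depth-bounding lemma'' is false, and with it the whole lower-bound argument collapses. The union of the $m$ planks with normal $\mathbf{n}_i$ indeed projects onto the $\mathbf{n}_i$-axis to a set of \emph{measure} at most $m\epsilon$, but that set can be located anywhere on the axis: each plank may be translated to any depth, so nothing forces $|1-p\cdot\mathbf{n}_i|\le m\epsilon$ for covered points $p$. Your conclusion would in fact show that this family of $\asymp\epsilon^{-4/3}$ planks cannot cover $B$ even \emph{dissectively}, contradicting the theorem of Kupavskii--Pach that any $O(\epsilon^{-1}\log\epsilon^{-1})$ planks of width $\epsilon$ admit a translative covering. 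Nor does the non-dissection constraint ``pin'' a plank near the sphere along its own normal: when nearly parallel planks peel the ball from the top, the later planks sit near the equatorial plane, i.e.\ at depth $\approx 1$ in their normal direction, while connectedness is perfectly preserved. What non-dissection really limits is not the depth but the \emph{volume gained per plank}: in the paper one takes $\Omega(\epsilon^{-4/3})$ nearly vertical normals that are pairwise $10\epsilon^{2/3}$-separated, argues that connectedness forces each plank to abut the top or bottom supporting plane of the current uncovered set $K_i$, and proves (Lemma~\ref{lem1}) that the contact face $X=\pi^L\cap K_i$ contains no disk of radius $10\epsilon^{1/3}$ --- otherwise the boundary normals over a concentric circle of radius $5\epsilon^{1/3}$ would all be within $\epsilon^{2/3}$ of the plank normal, forcing those points onto the sphere, where antipodal points of the circle have normals differing by $\Omega(\epsilon^{1/3})$. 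Hence each plank removes only $O(\epsilon\cdot\epsilon^{1/3})=O(\epsilon^{4/3})$ of the remaining volume, while $\Omega(1)$ volume must be exhausted. This geometric face-area bound, driven by the angular separation of the normals, is the missing idea.

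\textbf{Upper bound.} The key step of your chain construction --- that $m$ planks whose normals agree only up to angle $\eta$ can be stacked so as to peel a cap of depth $\approx m\epsilon$ --- is unjustified and false in general once $\eta\gg\epsilon$. A plank whose normal is tilted by $\eta$ and whose upper plane supports the current region can shave off a thin lateral wedge near the top (its tangency point in the tilted direction may be far from the vertical apex, especially after earlier cuts flatten the top), so the vertical extent of the uncovered region need not drop by anything like $\epsilon$ per plank. Note the warning sign: if your claim held for arbitrary $\eta$, taking $\eta$ constant would give $N\approx\epsilon^{-1}$ and prove Conjecture~\ref{conj1} outright. The paper's proof is precisely a quantitative substitute for this false step: it works with the outer parallel body $B(K_i)$ (smooth, containing a unit ball tangent at its top), and splits into two cases --- either the central vertical chord of $B(K_i)$ shortens by $\epsilon/2$ at half the steps, producing a cap of volume $\Omega(m^2\epsilon^2)$, or a plank cuts from $B(K_{i-1})$ a face of area $\Omega(\epsilon^{1-\alpha})\cdot\Omega(\epsilon^{1/2})$ and hence volume $\Omega(\epsilon^{5/2-\alpha})$ --- and then iterates over $\Theta(\epsilon^{-2\alpha})$ clusters of directions with $\alpha=3/4$, no single cluster covering a constant fraction; balancing the cases yields $\beta=7/4$. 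Your recursion worries can, incidentally, be dispensed with: if every plank is placed with its upper boundary plane equal to the upper supporting plane of the current uncovered set in the plank's own normal direction, the uncovered set remains an intersection of half-spaces with $B^3$, hence convex, so non-dissectiveness is automatic; and the ``popular buckets remain popular'' bookkeeping is avoided in the paper simply because each stage consumes only $\Theta(\epsilon^{2\alpha-\beta})$ planks, so at least half of the $k$ planks survive through all $\Theta(\epsilon^{-2\alpha})$ stages. As written, both halves of your proposal have genuine gaps at their central lemmas.
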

In particular, the lower bound shows that Conjecture~\ref{conj1} does not remain true if we restrict our attention to non-dissective coverings.

In the next two sections, we establish the lower and upper bounds in Theorem ~\ref{thm1}, respectively. We conclude this note with a few remarks (see Section 5).


\section{Proof of Theorem~\ref{thm1} -- Lower bound}

In this section, we establish the lower bound in Theorem ~\ref{thm1}. Consider $k = \Omega(\epsilon^{-4/3})$ points on the sphere such that the angular distance between any two of them is at least $10\epsilon^{2/3},$ and each point is within angle $1^\circ$ from the north pole.

Let $P_1,\ldots, P_k$ be a sequence of planks of width $\epsilon$, whose normal vectors correspond to the above $k$ points. Consider a non-dissective covering of the unit ball $B^3$ using these planks, in exactly this order. Since all planks are almost horizontal, we may talk about their `upper' and `lower' boundary planes.
\smallskip

Let $K_0=B^3$ and, for all $i,\; 1\le i \le k,$ let $K_i:=K_{i-1}\setminus P_i'$, where $P'_i$ is a suitable translate of $P_i$ used for the covering. By the assumption that the covering is non-dissective, $K_i$ is a connected set for every $i$. As soon as we cover a point of $B^3$ within angular distance $1^\circ$ from the equator, we stop the covering process, and we declare that the covering was \emph{successful}. We want to show that the number of planks used up to this moment is $\Omega(\epsilon^{-4/3}).$

We may assume recursively that $P'_{i+1}$, the translate of $P_{i+1}$ that we use for the covering, satisfies one of the following two conditions
\begin{enumerate}
    \item either the upper boundary plane of $P'_{i+1}$ coincides with the upper supporting plane of $K_i$,
    \item or the lower boundary plane of $P'_{i+1}$ coincides with the lower supporting plane of $K_{i}$.
\end{enumerate}
Assume without loss of generality that the first plank that covers a point at distance at most $1^\circ$ from the equator satisfies condition 1 above. Note that this implies that at this moment we have already covered a portion of the upper half of $B^3$ whose volume is $\Omega(1)$. Slightly abusing notation, we denote the planks that were used for the covering of the upper hemisphere also by $P'_1,\ldots, P'_k.$
\smallskip

We can bound the volume of $K_{i}$ that is covered by $P'_{i+1}$: it is at most $\epsilon$ times the area of the {face} 
of $K_{i+1}$ lying on the boundary of the plank  $P'_{i+1}$.  Note that this face is the intersection of the lower boundary plane of $P'_{i+1}$ with $K_i$.
Let $X$ denote this intersection.

\begin{lem}\label{lem1}
    $X$ does not contain a disk of radius $10\epsilon^{1/3}$.
\end{lem}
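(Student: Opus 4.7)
The plan is to argue by contradiction. If $X$ contains a disk $D$ of radius $r = 10\epsilon^{1/3}$ centered at $p$, then a slightly smaller concentric subdisk $D'$ of radius $r' = 5\epsilon^{1/3}$, when lifted vertically by the plank thickness, produces a cylinder $C = D' \times [0,\epsilon]$ that lies entirely inside $K_i$. The top face $D' \times \{\epsilon\}$ of this cylinder then provides a $2$-dimensional subset of $K_i$ sitting on its upper supporting plane, which is geometrically impossible. Choose coordinates so that $\mathbf{n}_{i+1} = \mathbf{e}_3$, $\Pi = \{z=0\}$, and (by condition~1) the upper supporting plane of $K_i$ is $\Pi' = \{z = \epsilon\}$; in particular, $K_i \subset \{z \le \epsilon\}$. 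Write $\alpha_j \ge 10\epsilon^{2/3}$ for the angle between $\mathbf{n}_j$ and $\mathbf{e}_3$, for every $j \le i$.

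The containment $C \subset K_i$ has two parts. First, $C \subset B^3$: since $D \subset B^3 \cap \Pi$ we have $|p| \le 1-r$, and so for $(x,z) \in C$,
$$|x|^2 + z^2 \le (|p|+r')^2 + \epsilon^2 \le (1 - 5\epsilon^{1/3})^2 + \epsilon^2 < 1$$
for sufficiently small $\epsilon$. Second, $C$ avoids each plank $P'_j$ with $j \le i$. The cross-section $S_j := P'_j \cap \Pi$ is a strip in $\Pi$ of width $\epsilon/\sin\alpha_j$, and a direct calculation (parametrize the segment and substitute into $a_j \le \mathbf{n}_j \cdot x \le a_j+\epsilon$) shows that for $x \in \Pi \setminus S_j$ the vertical segment $\{x\} \times [0,\epsilon]$ meets $P'_j$ if and only if $d(x,S_j) \le \epsilon \cot\alpha_j$. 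Now $D \cap S_j = \emptyset$ forces $d(p,S_j) \ge r$, so for any $x \in D'$,
$$d(x,S_j) \ge d(p,S_j) - r' \ge r - r' = 5\epsilon^{1/3},$$
while $\epsilon\cot\alpha_j \le \epsilon/\sin(10\epsilon^{2/3}) \le \epsilon^{1/3}/9$ for small $\epsilon$. Hence $d(x,S_j) > \epsilon\cot\alpha_j$ for every $x \in D'$, so $C \cap P'_j = \emptyset$.

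It follows that $D' \times \{\epsilon\} \subset K_i \cap \Pi'$. Because $\Pi'$ is the upper supporting plane of $K_i$, every point of $K_i \cap \Pi'$ lies on $\partial K_i \subset \partial B^3 \cup \bigcup_{j\le i} \partial P'_j$. The plane $\Pi'$ meets $\partial B^3$ in a single circle and meets each $\partial P'_j$ (two planes with normal $\mathbf{n}_j \ne \pm \mathbf{e}_3$) in two parallel lines. Thus $\partial K_i \cap \Pi'$ is a finite union of $1$-dimensional curves and cannot contain the $2$-dimensional disk $D' \times \{\epsilon\}$, a contradiction.

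The crux of the argument is the lift: once one shows that the shadows of the previous planks on $\Pi$, after being thickened by the $\epsilon$-lift, have width strictly less than the gap $r - r' = 5\epsilon^{1/3}$ that $D'$ enjoys from each $S_j$, the rest is automatic. The angular separation $10\epsilon^{2/3}$ is exactly what makes this lifted width $O(\epsilon^{1/3})$ with a controllable constant, and the final dimension count rests only on the fact that no previous plank normal $\mathbf{n}_j$ equals $\mathbf{n}_{i+1}$, so no flat $2$-dimensional face of $K_i$ can coincide with the plane $\Pi'$.
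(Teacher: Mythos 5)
Your proof is correct in substance, and it takes a genuinely different route from the paper. The paper's argument pushes each point $w$ of a circle of radius $5\epsilon^{1/3}$ inside $D$ up along $n$ to a boundary point $w'$ of $K_i$, shows via a two-dimensional section that the supporting normal at $w'$ makes angle at most $\epsilon^{2/3}$ with $n$, uses the $10\epsilon^{2/3}$ angular separation to conclude that all these $w'$ lie on the sphere, and then gets a contradiction from the curvature of the sphere (normals at antipodal points of a circle of radius $\Theta(\epsilon^{1/3})$ differ by $\Omega(\epsilon^{1/3})\gg\epsilon^{2/3}$). You instead lift the subdisk $D'$ bodily by $\epsilon$ to the supporting plane $\Pi'$, using the same angular separation only through the elementary ``shadow'' computation that the set of points of $\Pi$ whose vertical $\epsilon$-segment meets a previous plank is the strip $S_j$ enlarged by $\epsilon\cot\alpha_j=O(\epsilon^{1/3})$, which is smaller than the safety margin $r-r'=5\epsilon^{1/3}$; the contradiction is then purely dimensional, since $K_i\cap\Pi'\subset\partial K_i\cap\Pi'$ lies in one circle plus finitely many lines (no previous plank is parallel to $P'_{i+1}$). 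This avoids any quantitative use of the sphere's curvature and is arguably more elementary; it even yields the lemma with a smaller constant in place of $10$. Two small repairs are needed. First, your verification that the cylinder stays inside $B^3$ tacitly assumes the center of the ball lies on $\Pi$ (you write $|p|\le 1-r$ and test $|x|^2+z^2<1$), which is not the situation: $\Pi$ sits near the top of $K_i$. The claim is still true, but you must use that $B^3\cap\Pi$ contains $D$ and hence has radius $\rho\ge 10\epsilon^{1/3}$: if the ball is centered at distance $h$ from $\Pi$, a point at in-plane distance at least $5\epsilon^{1/3}$ from the circle $\partial B^3\cap\Pi$ lifted by $t\le\epsilon$ stays inside because $10\rho\,\epsilon^{1/3}\ge 100\epsilon^{2/3}>25\epsilon^{2/3}+2h\epsilon+\epsilon^2$; without the lower bound on $\rho$ (e.g.\ for a nearly tangent plane with $\rho\sim\sqrt{\epsilon}$) the lift could exit the ball. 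Second, your ``if and only if'' for when a vertical segment over $x\notin S_j$ meets $P'_j$ holds only on one side of the strip; this is harmless since you only use the implication that $d(x,S_j)>\epsilon\cot\alpha_j$ precludes intersection, but it should be stated that way.
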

The lemma implies the lower bound as follows. The diameter of $X$ is at most $2$, which is the diameter of $B^3$. Combining this fact with Lemma~\ref{lem1}, we obtain that the area of $X$ is $O(\epsilon^{1/3})$. This, in turn, implies that, for every $i,\; 1\le i\le k,$ the volume of the intersection $P'_{i+1}\cap K_i$ is $O(\epsilon^{4/3})$. The volume of the upper part of $B^3$ that we need to cover is $\Omega(1)$. Hence, we need $\Omega(\epsilon^{-4/3})$ planks to exhaust this volume.

\begin{figure}[h]
\includegraphics[width=\textwidth]{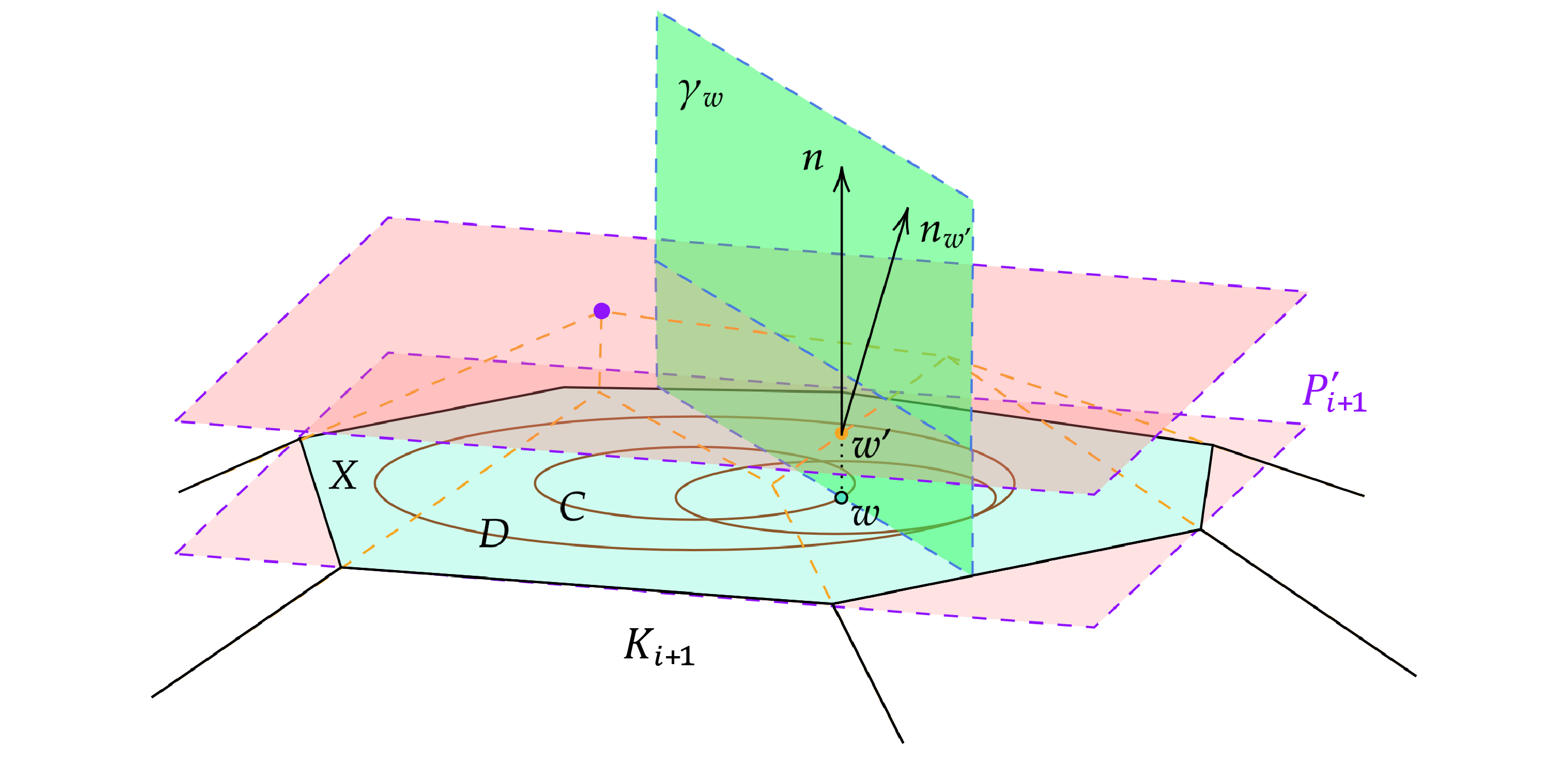}
\caption{Discs $D,C$ on the face $X$ of the set $K_{i+1}$. The plank $P'_{i+1}$ is shown by two parallel planes in semi-transparent red. The part of $K_{i}$ that was cut off by $P'_{i+1}$ is indicated in dashed orange. the plane $\gamma_w$ containing $w,w'$ and normal vectors $n, n_{w'}$ is semi-transparent green.} \label{fig1}
\end{figure}

\begin{proof}[Proof of Lemma~\ref{lem1}] Let $n$ stand for the normal vector to $\pi^L$. For any point $w$ in $X$, we denote by $w'$ the point on the boundary of $K_i$ that is obtained from $w$ by translating it parallel to $n$ inside $P'_{i+1}$. Note that $w'$ and $w$ are at most $\epsilon$ distance apart.

Let us show that $X$ does not contain a disk $D$ of radius $10\epsilon^{1/3}$.  See Figure~\ref{fig1}. Assume the contrary, and let $v$ be the center of such a disk $D$.

Consider the circle $C$ in $\pi^L$, concentric to $D$ and of radius $5\epsilon^{1/3}$. Take a point $w\in C$. Consider  the normal vector $n_{w'}$ of a supporting hyperplane for $K_i$ at $w'$, which choice we explain below. If the boundary of $K_i$ at $w'$ has a flat part, then it must also lie on the boundary of one of the planks $P_{j}'$, for some $j\le i$. Then take as $n_{w'}$ the normal vector to $P_{j}'$. If the boundary of $K_i$ at $w'$ has no flat part, then it must lie on the boundary of the initial ball $B^3$, and then we simply take the unique supporting hyperplane to the ball at $w'$.

Consider the intersection $Q$ of $K_i\cap P'_{i+1}$ with the plane $\gamma_{w}$ spanned by $n$ and $n_{w'}$ and passing through $w$. By our assumption, $X$ contains a disc of radius $5 \epsilon^{1/3}$ centered at $w$.
Thus, $Q$ a convex set of height at most $\epsilon$ and with the `base' being a straight line segment orthogonal to $n$ that passes through $w$ and has length  at least $5\epsilon^{1/3}$ in each direction from $w$. Thus, the angle between $n$ and $n_{w'}$ is at most 
$\epsilon^{2/3}$: otherwise, the supporting line at $w'$ corresponding to $n_{w'}$ will hit the base segment at distance at most $\epsilon\cdot \cot(\epsilon^{2/3})<2\epsilon^{1/3}$ from $w$, which is a contradiction. See Figure~\ref{fig2}.

\begin{figure}[h]
\includegraphics[width=\textwidth]{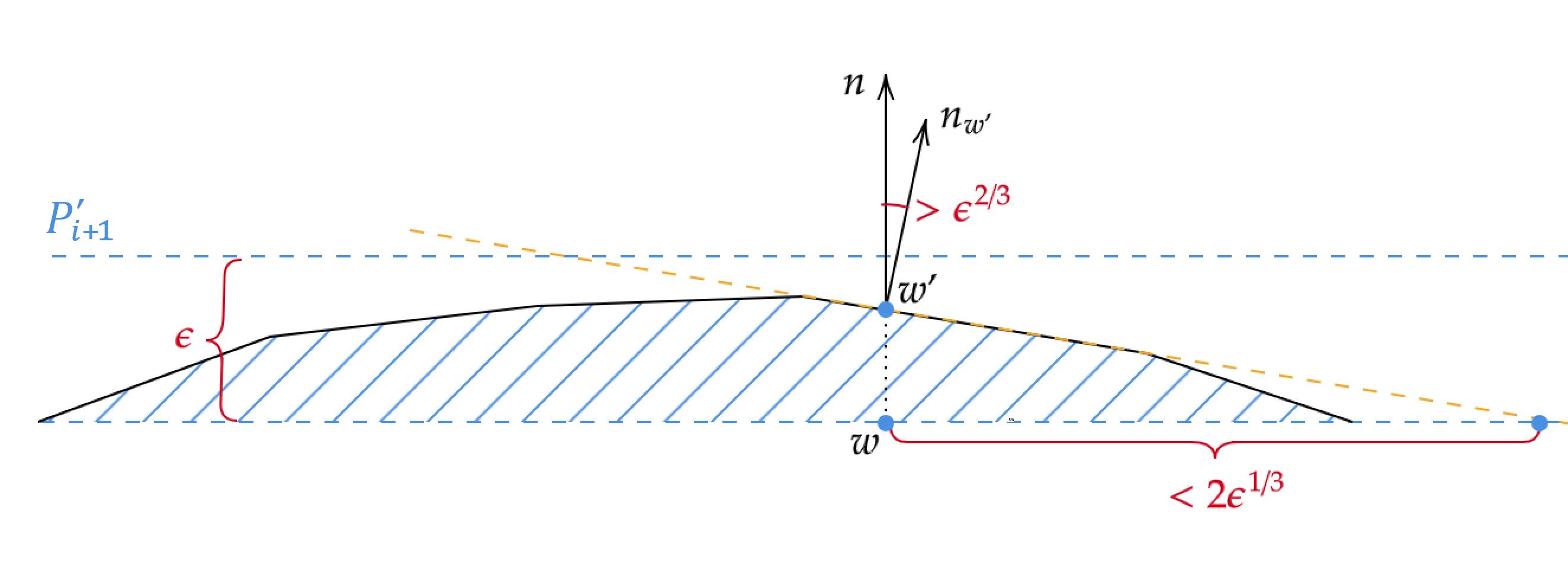}
\caption{An illustration to the proof of Lemma~\ref{lem1}: If the angle between $n$ and $n_{w'}$ is bigger than $\epsilon^{2/3}$, then the supporting line at $w'$ hits the `base' $\pi^L\cap \gamma_w$ at distance at most $2\epsilon^{1/3}$. } \label{fig2}
\end{figure}

We conclude that 
$n_{w'}$ has angle at most $\epsilon^{2/3}$ with $n$ for any $w\in C$. Since the angle between the normal vectors of any two planks is at least $10\epsilon^{2/3}$,  $n_{w'}$ cannot be a normal vector to any other plank. Running the same argument for each point $w\in C$, We conclude that the set $C':=\{w':w\in  C\}$ lies on the boundary of $B^3$. 
But then the angle between the normal vectors at antipodal points on  $C'$ is $\Omega(diam(C)) = \Omega(\epsilon^{1/3})$, a contradiction with the fact that each of them has angle at most $\epsilon^{2/3}$ with $n$. 
\end{proof}

 \section{Proof of Theorem~\ref{thm1} -- Upper bound}

Consider $k$ planks of width $\epsilon$, and write $k$ in the form $k = \epsilon^{-\beta}$ for some $\beta>0.$ Let $\alpha>0$ be a constant to be specified later. By averaging over all directions, we can find a vector $n$ such that there are  $\Theta(\epsilon^{2\alpha-\beta})$ planks whose normal vectors form an angle of at most $\epsilon^\alpha$ with $n$. Let these planks be $P_1,\ldots, P_m$.
(At the end of the proof we will see that for our arguments to work, the optimal choice of the parameters will be $\beta\approx 7/4$ and $\alpha\approx 3/4$.)
\smallskip

Choose a coordinate system in which the direction of the positive $z$-axis, pointing upwards, is $n$. Since the normal vectors of the planks $P_1, \ldots, P_m$ are very close to $n$, we can call these planks `almost horizontal,' and we can talk about their `upper' and `lower' boundary planes.
\smallskip

Given a convex body $K$, the \emph{(outer) parallel body} of $K$ at distance $1$ is defined as $$B(K):= \{v\in \R^3: \inf_{w\in K}\|v-w\|\le 1\}.$$
Clearly, $B(K)$ is a smooth body, hence in every direction it has precisely two supporting planes.

\smallskip

We are going to process the planks one by one. Set $K_0=B^3$. Let $P_1'$ denote the translate of $P_1$ whose upper boundary plane is tangent to $K_0$ from above. Informally, this means that we move $P_1$ from above as long as we can, preserving the property that $K_0\setminus P_1'$ remains connected. Since $K_0=B^3$, we have $B(K_0)=2B^3$. Denote by $P_1''$ the translate of $P_1$ whose upper boundary plane is tangent from above to $B(K_0)$.

Next, we set $K_1:=K_0\setminus P_1'$. Let $P_2'$ denote the translate of $P_2$ whose upper boundary plane is tangent to $K_1$ from above. Analogously, denote by $P_2''$ the translate of $P_2$ whose upper boundary plane is tangent to $B(K_1)$ from above. Let $K_2:=K_1\setminus P_2$ Proceeding like this, after performing $j$ steps ($j\le m$), we obtain a sequence of convex bodies $K_0=B^3\supset K_1\supset K_2\supset\cdots\supset K_j,$ and two sequences of planks,  $P_1',\ldots,P_j'$ and $P_1'',\ldots,P_j''$ with the following properties:
\begin{enumerate}
\item  $B(K_0)\supset B(K_1)\supset\cdots\supset B(K_j)$
\item  For every $i,\;1\le i\le j,$ the distance between the lower (resp. upper) boundary planes of $P_i'$ and $P_i''$ is $1$.
\item  If $K_{j}=K_{j-1}\setminus P_j'$ is \emph{empty}, we terminate the process.
\end{enumerate}

By definition, for every $j$, we have $K_{j}=K_0\setminus \cup_{i=1}^j P'_i.$  We will keep track of the (decreasing) sequence of volumes $\Vol B(K_i)$. If for some $j,$ we have
\begin{equation}\label{eqvolbound}\Vol B(K_{j}) < \Vol B^3,\end{equation}
then we can conclude that $K_{j}$ is empty, i.e., the initial ball $B^3$ is completely covered by the planks $\cup_{i=1}^{j}P'_i.$
\smallskip

We need some easy observations.

\begin{prop}\label{prop1} Let $\pi_i', \pi_i''$ be the lower boundary planes of $P_i',P_i''$, respectively, and let $\Pi_i$ be the upper half-space bounded by the $\pi''_i$.

Then, for $i=1,2,\ldots$, we have $$B(K_{i})=B(K_{i-1}\setminus P_i')\subset B(K_{i-1})\setminus \Pi_i\subset B(K_{i-1})\setminus P_i''.$$
\end{prop}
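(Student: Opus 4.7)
The first equality $B(K_i)=B(K_{i-1}\setminus P_i')$ is immediate from the definition $K_i:=K_{i-1}\setminus P_i'$. The rightmost inclusion is essentially a tautology: by construction, $\pi_i''$ is the lower boundary plane of $P_i''$, so the plank $P_i''$ lies entirely on the upper side of $\pi_i''$, i.e.\ $P_i''\subset \Pi_i$, which yields $B(K_{i-1})\setminus \Pi_i\subset B(K_{i-1})\setminus P_i''$. All the content of the proposition is therefore concentrated in the middle inclusion $B(K_{i-1}\setminus P_i')\subset B(K_{i-1})\setminus \Pi_i$.

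To handle this, I plan to work in a coordinate adapted to $P_i$: let $n_i$ be the unit normal to $P_i$ pointing upward, and define $h(x):=\langle x,n_i\rangle$. Set $a:=\max_{x\in K_{i-1}}h(x)$. Since the upper boundary plane of $P_i'$ was chosen tangent to $K_{i-1}$ from above, $P_i'=\{x:a-\epsilon\le h(x)\le a\}$. Because $B(K_{i-1})$ is the Minkowski sum of $K_{i-1}$ with a unit ball, its supporting plane in direction $n_i$ lies at height $a+1$, so $P_i''=\{x:a+1-\epsilon\le h(x)\le a+1\}$ and $\Pi_i=\{x:h(x)\ge a+1-\epsilon\}$.

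It is immediate that $B(K_{i-1}\setminus P_i')\subset B(K_{i-1})$, since $K_{i-1}\setminus P_i'\subset K_{i-1}$. It remains to verify that $B(K_{i-1}\setminus P_i')$ is disjoint from $\Pi_i$. Let $v$ be within distance $1$ of some $w\in K_{i-1}\setminus P_i'$; then $h(w)<a-\epsilon$ (combining $w\in K_{i-1}$, which forces $h(w)\le a$, with the condition $w\notin P_i'$), and the Cauchy-Schwarz inequality gives
\[ h(v)=h(w)+\langle v-w,n_i\rangle\le h(w)+\|v-w\|< (a-\epsilon)+1=a+1-\epsilon, \]
so $v\notin \Pi_i$, as required.

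I do not foresee any real obstacle here: the proposition is a bookkeeping identity encoding the geometric fact that excising the top slab $P_i'$ from $K_{i-1}$ shrinks its parallel body by at least as much as excising the parallel, one-unit-higher slab $P_i''$ would, and this is captured entirely by the one-line height computation above.
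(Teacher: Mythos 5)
Your proposal is correct and follows essentially the same route as the paper: the first equality and the last inclusion are handled exactly as there, and your height computation with $h(x)=\langle x,n_i\rangle$ is just an explicit coordinate version of the paper's argument that a point of $B(K_{i-1}\setminus P_i')$ lies within distance $1$ of a point below $\pi_i'$, while $\Pi_i$ sits at distance $1$ above $\pi_i'$ (the paper's property 2). The only (shared, harmless) imprecision is treating the infimum in the definition of $B(\cdot)$ as attained and ignoring the boundary case of equality.
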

\begin{proof}
We start by verifying the first inclusion. Let $v$ be any point of $B(K_{i-1}\setminus P_i')$. By definition, there exists $w\in K_{i-1}\setminus P_i' $    with $||v-w||\le 1.$ Thus, $v$ also belongs to $B(K_{i-1})$. We only have to argue that $v$ does not lie inside the half-space $\Pi_i.$  If it did, by property 2 above, it would be farther away than $1$ from the lower boundary plane of $P_i'$, which separates $K_{i-1}\setminus P_i'$ from $v$. This would contradict our assumption that $v\in B(K_{i-1}\setminus P_i').$

The second inclusion holds since $P''_i\subset \Pi_i.$
\end{proof}

Recall that, according to our conventions, the direction of the positive $z$-axis of the coordinate system coincides with the vector $n$. At the beginning, we fixed a vertical direction $n$, but in the sequel we will also use the following simple fact in other scenarios.

\begin{prop}\label{prop2}
Let $K$ be a convex body and let $s$ be a boundary point of the parallel body $B(K)$ such that the $z$-coordinate of $s$ is maximum.

Then $B(K)$ contains a ball $U$ of unit \emph{radius} that touches the boundary of $B(K)$ at $s$. \qed
\end{prop}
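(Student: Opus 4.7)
The plan is to represent the parallel body $B(K)$ as the Minkowski sum $K+B^3$ (where $B^3$ is the closed unit ball centered at the origin) and then to identify an explicit candidate for the unit ball $U$, namely the translate of $B^3$ centered at the point of $K$ with maximum $z$-coordinate.

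First I would observe that $B(K)=K+B^3$: a point $v$ lies in $B(K)$ iff there exists $w\in K$ with $\|v-w\|\le 1$, i.e.\ iff $v-w\in B^3$ for some $w\in K$. Next I would locate the topmost point. Let $M=\max_{w\in K}z(w)$ and let $w_0\in K$ be a point attaining this maximum. For any $v=w+u\in K+B^3$ we have $z(v)=z(w)+z(u)\le M+1$, with equality iff $z(w)=M$ and $u=(0,0,1)$. Hence the maximum value of the $z$-coordinate on $B(K)$ equals $M+1$, and every topmost boundary point of $B(K)$ has the form $w'+(0,0,1)$ for some $w'\in K$ with $z(w')=M$. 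In particular, writing $s=w_0+(0,0,1)$ for an appropriate choice of $w_0$, we have $s\in w_0+\partial B^3$.

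I would then let $U$ be the closed unit ball centered at $w_0$. Any $v\in U$ satisfies $\|v-w_0\|\le 1$ with $w_0\in K$, so $v\in B(K)$; thus $U\subset B(K)$. Since $s=w_0+(0,0,1)\in\partial U$ and $s\in\partial B(K)$ by hypothesis, the inclusion $U\subset B(K)$ together with $s\in\partial U\cap\partial B(K)$ shows that $U$ touches the boundary of $B(K)$ at $s$, as required.

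There is really no obstacle here: the argument reduces to the Minkowski-sum description of $B(K)$ and a one-line maximization of the $z$-coordinate. The only mild point of care is the possibility that $K$ attains its maximum $z$-coordinate on a whole flat face; in that case any $w_0$ in this face serves, and the freedom in the choice of $w_0$ matches the freedom in the choice of the topmost boundary point $s$ of $B(K)$.
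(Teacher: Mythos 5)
Your proof is correct. The paper gives no written argument for Proposition~\ref{prop2} (it is stated as immediate), and your reasoning via the Minkowski-sum description $B(K)=K+B^3$, taking $U$ to be the unit ball centered at $w_0=s-(0,0,1)$ with $z(w_0)=\max_{w\in K}z(w)$, is exactly the standard justification the authors intend, including the correct treatment of the case where $K$ has a flat horizontal top and the topmost point of $B(K)$ is not unique.
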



Now we quickly outline our proof strategy for the upper bound in Theorem~\ref{thm1}. Recall that we selected a `dense' subset of almost parallel directions, in terms of two constants $\alpha$ and $\beta$. Our first and main goal is to bound from below the volume of the parallel body $B(K_j)$ of the part of $B$ that we cover with the first $j$ planks corresponding to the chosen directions. To get this lower bound, we will have two cases to consider, and the parameter $\alpha$ is chosen to balance the bound in the two cases. We cover a substantial part of $K_0=B^3$ using these planks, and remove them. This constitutes the first stage of our covering algorithm.

At the second stage, we repeat the same procedure. This time, in the remaining set of directions we find a `dense' subset (using the same $\alpha$), and we use the corresponding, almost parallel, planks to cover the remaining part of the ball. While at stage 1, we started with $K_0=B^3,$ at stage 2 we start where we left off at stage 1. That is, our new initial body $K^{(2)}_0$ will be identical with last $K_j$ at stage 1. Again, we bound from below the volumes of the parallel bodies of the new shrinking sequence
$$K^{(2)}_0\supset K^{(2)}_1 \supset K^{(2)}_2\supset\cdots$$
of uncovered bodies. Because of our choice of parameters, it is guaranteed that after performing this procedure a bounded number of times, at some stage we reach a point where \eqref{eqvolbound} holds. At this point, we can be sure that $B^3$ is entirely covered.
\smallskip

Apply Proposition~\ref{prop2} with $K_0=B$ to obtain a ball $U\subset B(K_0)$ of unit radius that touches the boundary of $B(K_0)$ at its highest point in the vertical direction $n$. Let $\ell$ be the vertical line passing through the center of $U$.

We distinguish two cases.
\smallskip

\textbf{Case A:} There is a set of indices $A\subset \{1,\ldots,m\}$ with $|A|\ge m/2$ such that for any $i\in A,$ the intersection of $\ell$ with $B(K_i)$ is at least $\epsilon/2$ shorter than its intersection with $B(K_{i-1})$.
\smallskip

\textbf{Case B:} There is a set of indices $B\subset \{1,\ldots,m\}$ with $|B|\ge m/2$ such that for any $i\in B$, the intersection of $\ell$ with $B(K_i)$ is less than $\epsilon/2$ shorter than its intersection with $B(K_{i-1})$.

\smallskip

In \textbf{Case A}, we start our argument as follows. To simplify the notation, assume that $m\in A$. Suppose that the center of $U$ is inside $B(K_m)$ or we have $m\epsilon = o(1)$. Using Proposition~\ref{prop1}, we obtain that
\begin{equation*}B(K_m)\subset B(K_{m-1})\setminus \Pi_m \subset B(K_0)\setminus \Pi_m.
\end{equation*}
      Since $U\subset K_0$, we also have that
      $$U\cap \Pi_m\subset  B(K_0)\cap \Pi_m.$$
      By the above inclusions, we can bound from below the difference between the volumes of $B(K_m)$ and  $B(K_0)$.
      \begin{align*}\Vol B(K_0)-\Vol B(K_m)&\ge
      \Vol B_0 - \Vol \big(B(K_0) \setminus \Pi_m\big)\\
      &= \Vol\big(B(K_0)\cap \Pi_m)\big) \\
      &\ge \Vol\big(U\cap \Pi_m\big)=\Omega(m^2\epsilon^2).\end{align*}

We explain the last equality. The set $U\cap \Pi_m$ is a spherical cap that contains a section of the vertical line $\ell$ of length at least $m\epsilon/4$. 
The lower boundary plane $\pi_m''$ of $P''_m$ is almost horizontal: its normal vector has an angle of at most $\epsilon^{\alpha}$ with $\ell$. Thus, $U\cap \Pi_m$ contains a spherical cap and, hence, a cone whose height is roughly the same as as the length of the portion of $\ell$ contained in $U\cap \Pi_m$, which is  $\Omega(m\epsilon)$. The radius of the circular basis of this cone (and of the cap) is $\Omega(\sqrt{m\epsilon})$. The volume of this cone is $\Omega(m^2\epsilon^2),$ as claimed.


    If the center of $U$ is outside $Q_m$ or if $m\epsilon = \Omega(1)$, 
    then we have
    $$ \Vol B(K_0)-\Vol B(K_m)=\Omega(1).$$
Recall that $m  = \Theta(\epsilon^{2\alpha-\beta})$. Thus, the volume that we managed to cover in this case is at least
\begin{equation}\label{eqbound1}\min\big\{\Omega\big(m^2\epsilon^2\big),\Omega(1)\big\} = \min\big\{\Omega\big(\epsilon^{4\alpha-2\beta+2}\big),\Omega(1)\big\}.\end{equation}

In \textbf{Case B}, we proceed as follows. In view of Proposition~\ref{prop1}, the condition implies that for any $i\in B$, the intersection of $\ell$ with $B(K_{i-1})\setminus \Pi_i$ is less than $\epsilon/2$ shorter than that of $B(K_{i-1}).$
\smallskip

\begin{figure}[h]
\includegraphics[width=\textwidth]{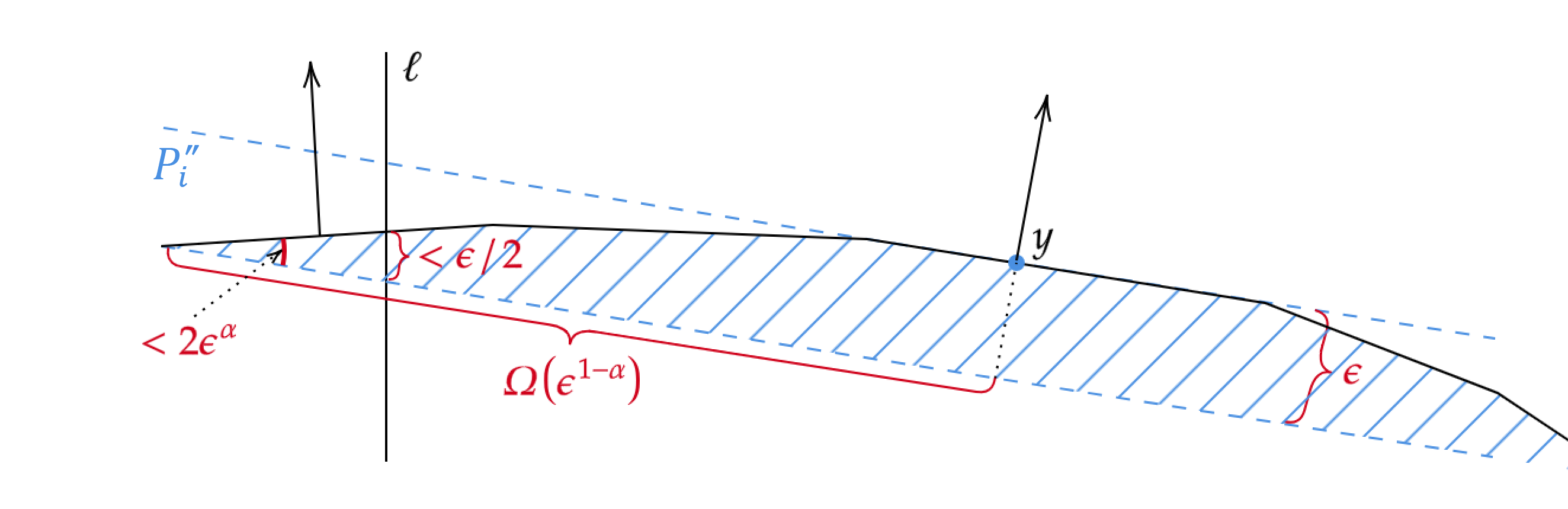}
\caption{An illustration for the proof of the upper bound in Case B. The convex curve is a portion of the boundary of the intersection of $B(K_{i-1})$ with $\gamma$. The shaded area is the part of this intersection covered by the plank $P_i''$, denoted by $
Q$. } \label{fig3}
\end{figure}

Consider the plane $\gamma$ spanned by $\ell$ and a point $y$ at which the upper boundary  plane of $P_i''$ touches $B(K_{i-1})$. Set $$Q:=\gamma\cap B(K_{i-1})\cap P_i''.$$
See Figure~\ref{fig3}. If $Q$ intersects $\ell$, then let $q$ denote the intersection point of the boundary of $Q$ with $\ell$. If $Q$ does not intersect $\ell$, then let $q$ be the point of $Q$ closest to $\ell$, i.e., the endpoint of the `base' of $Q$ closer to $\ell$.
 \smallskip

      In both cases, the angle between the normal to the boundary of $\gamma\cap B(K_{i-1})$ at $q$ and the normal at $y$ is at most $2\epsilon^\alpha$, because normal vectors of all planks $P_1,\ldots, P_m$ are within an angle of at most $\epsilon^\alpha$ from the vertical direction $n$ (which is the same as the direction of $\ell$). The \emph{height} of $Q$ at $y$, i.e., the intersection of $Q$ with a vertical line through $y$ is at least $\epsilon$, while its height at $z$ is at most $\epsilon/2$, by the assumption of Case B. 
      This implies that the base of $Q$ has length $\Omega(\epsilon^{1-\alpha})$.
\smallskip

      Stepping out of the plane $\gamma$, we find that the intersection of $P''_i$ with $B(K_{i-1})$ cuts out a whole 2-dimensional face. This face contains a disk of radius $\Omega(\epsilon^{1/2})$. Indeed, as before, $B(K_{i-1})$ contains a unit ball that is tangent to the boundary of $B(K_{i-1})$ at the point $y$. Combining the last two facts, we obtain that the area of the face cut out by $P''_i$ is $\Omega(\epsilon^{1-\alpha})\cdot\Omega(\epsilon^{1/2}) = \Omega(\epsilon^{3/2-\alpha})$. Thus, $B(K_{i-1})\cap P''_i$ contains a cone with base of area $\Omega(\epsilon^{3/2-\alpha})$ and height $\epsilon$. This implies that
      $$\Vol (B(K_{i-1})\cap P''_i) = \Omega(\epsilon^{5/2-\alpha}).$$
      Therefore, in Case B, we have that
      \begin{equation}\label{eqbound2}\Vol B(K_0)-\Vol B(K_m)\ge \Omega(m\epsilon^{5/2-\alpha})=\Omega(\epsilon^{5/2+\alpha-\beta}).\end{equation}
\medskip

To complete the proof of the upper bound in Theorem~\ref{thm1}, we iterate the above procedure, as follows. After finishing the above procedure, stage 1 of our algorithm, we
remove all planks we have used so far. At the next stage, we find a new vector $n'$ such that among the remaining planks there are $\Theta(\epsilon^{2\alpha-\beta})$, the normal vectors of which enclose an angle of at most $\epsilon^\alpha$ with $n'$.  Repeating the same argument $\Omega(\epsilon^{-2\alpha})$ times, the total number of remaining planks is still at least $k/2$. Thus, up to constant factors, at each stage we obtain the same lower bound as above
for the total volume of the pieces cut off by the planks.
\smallskip

Combining \eqref{eqbound1} and \eqref{eqbound2}, we conclude that this procedure allows us to cover at least
$$\Omega(\epsilon^{-2\alpha})\cdot \min\big\{\epsilon^{2+4\alpha-2\beta},1, \epsilon^{2\alpha-\beta+5/2-\alpha}\big\} =  \Omega\big(\min\big\{\epsilon^{2+2\alpha-2\beta},\epsilon^{-2\alpha}, \epsilon^{5/2-\beta-\alpha}\big\}\big)$$
of the total volume of $B(B^3)$.
 Choose $\alpha$ such that $2+2\alpha-2\beta = 5/2-\alpha-\beta$, that is, $\alpha = \frac 16+\frac 13\beta$. Then the above minimum is equal to $\Omega(\min\{\epsilon^{\frac 73-\frac 43\beta},$ $\epsilon^{-\frac 13-\frac 23\beta}\})$. In view of (\ref{eqvolbound}), if this value is bigger than the volume of $B(B^3)$, then we must have covered the entire ball $B^3$, and we are done. If we put $\beta=\frac 74$, then this quantity is $\Omega(1)$. This means that taking $k=C \epsilon^{-7/4}$ with a sufficiently large $C$ is enough to cover $B^3$.

This completes the proof of the theorem.  \qed

\section{Concluding remarks}
\textbf{1.} In the present note, we focused on the non-dissective covering problem in 3 dimensions. For $d>3$, the same lower bound remains valid: indeed, in order to cover a ball in $\R^d$, one needs to cover any of its $3$-dimensional sections. One may try to go into the proof and adapt the lower bound argument to the $d$-dimensional case. We should then take $\Omega(\epsilon^{-2(d-1)/d})$ points on the sphere with pairwise distances at least $10\epsilon^{2/d}$. The same analysis shall lead to a slightly stronger lower bound $\epsilon^{-2(d-1)/d}.$ The exponent in the bound does not grow linearly with $d$, because Lemma~\ref{lem1} guarantees only one `short' direction in the intersection of a plank and the remaining body.
\smallskip

\textbf{2.} The proof of the upper bound can also be extended to $d>3$, and the resulting bound would be $C_d\epsilon^{-\frac{d+1}2 +\frac{d-1}{d^2-d+2}}$, which is better than \eqref{EQ}, albeit by an additive term in the exponent that tends to $0$ as $d$ grows. We believe that neither the lower nor the upper bound is tight.
\smallskip

\textbf{3.} Our upper bound argument in Section~\ref{sec3} can be easily turned into a polynomial time algorithm for constructing a cover. Take $k=C\epsilon^{-7/4}$ planks of width $\epsilon$. We first iteratively construct an ordering of the planks which will turn out to guarantee that we end up with a cover. For this, first we (approximately) compute the angle between any two normal vectors and construct a graph $G$ whose vertices are the selected $k$ planks. Two vertices are joined by an edge if the angle between the normal vectors of the corresponding planks is smaller than $\epsilon^{\alpha} = \epsilon^{3/4}$. Then, at each step, we
\begin{itemize}
    \item[ (a)] find a vertex $v$ whose degree is at least the average degree in $G$;
    \item[  (b)] include $v$ and its neighbors in the ordering (the order within this set is arbitrary);
    \item[   (c)] modify $G$ by deleting $v$ and its neighbors from $G$.
\end{itemize}
In the proof of the upper bound, at each step, we actually work with a chunk of vectors that were included in the ordering in one step.

Finally, we construct the covering, i.e., we find suitable translates of the planks that cover the ball, as follows. For the $i$-th plank in the ordering, we need to find the maximum (or an approximate maximum) of a linear function over the convex set $K$ that remains from the original ball after removing all points covered by the first $i-1$ planks. Then we modify the set $K$ by including an extra linear inequality in its definition, and repeat the whole procedure, if necessary.

\end{document}